\newtheorem{thm}{Theorem}[section]
\newtheorem*{thm*}{Theorem}
\newtheorem*{cor*}{Corollary}
\newtheorem*{prop*}{Proposition}
\newtheorem{cor}[thm]{Corollary}
\newtheorem{prop}[thm]{Proposition}
\newtheorem{lem}[thm]{Lemma}
\theoremstyle{definition}
\newtheorem{defn}[thm]{Definition}
\newtheorem{con}[thm]{Construction}
\newtheorem{exmp}[thm]{Example}
\newtheorem*{notn*}{Notation}
\theoremstyle{remark}
\newtheorem*{idea*}{Idea}
\let\c@equation\c@thm
\numberwithin{thm}{section}
\numberwithin{equation}{section}
\title[]{On a class of indefinite Kac-Moody algebras}
\author{Kehan Wang}
\begin{document}

\maketitle
\begin{abstract}
In this paper, we study a special class of indefinite Kac-Moody algebras. Based on the study of hyperbolic Kac-Moody algebras, we give the definition of $N_k$ type Kac-Moody algebras and study some properties of this special type Kac-Moody algebras.
\end{abstract}

\flushbottom

\section{Introduction}
\subsection{Background}
A Kac-Moody algebra is defined by a generalized Cartan matrix (GCM), which directly gives us the structure of the Kac-Moody algebra and its certain properties. In this paper, we construct and study new classes of generalized Cartan matrices, which correspond to indecomposable Kac-Moody algebras of the indefinite type over $\mathbb{C}$.

It is well-known that any indecomposable Kac-Moody algebra and the corresponding GCM, fall into one of the following three types: \emph{finite}, \emph{affine} or \emph{indefinite}. A finite Kac-Moody algebra is exactly a semisimple Lie algebra. There is a one-to-one correspondence between indecomposable finite Kac-Moody algebras and diagrams, which is a certain class of directed graphs \cite{Hum}. When it comes to the affine type, we have an one-to-one correspondence between indecomposable affine Kac-Moody algebras and extended diagrams \cite{Kac}. By the above discussion, there is a one-to-one correspondence between the following three objects in the finite and affine cases.
\begin{align*}
\text{Kac-Moody algebras } \Longleftrightarrow \text{ generalized Cartan matrices } \Longleftrightarrow \text{Diagrams }
\end{align*}

Here is another very interesting property about Kac-Moody algebras. An affine GCM is not finite, but any proper principal submatrix is of finite type. A GCM is \emph{hyperbolic} if it is neither finite nor affine, and any proper principal submatrix is either of finite or of affine type. Based on this property, we can extend the definition of hyperbolic GCM and define a new class of indefinite Kac-Moody algebras.

We make the following definitions of new types of Kac-Moody algebras (see Definition \ref{203} and Corollary \ref{204}).
\begin{enumerate}
    \item Type $N_0$: finite type GCM.
    \item Type $N_1$: affine type GCM.
    \item Type $N_2$: Hyperbolic type GCM.
    \item Type $N_k$: a GCM is not of type $N_{m}$ for $m < k$, but every proper principal submatrix is of type $N_m$ for $m<k$.
\end{enumerate}
Furthermore, we say that a GCM $A$ is of type $N_{k,n}$ if $A$ is of type $N_k$ with dimension $n$. The goal of this paper is to study these new classes of GCMs.

\subsection{Description of the main results}
In this paper, we study the methods of constructing and bounds of dimensions of $N_{k}$ type generalized Cartan matrices. We prove the following fundamental properties of the generalized Cartan matrices:

\begin{thm}[\textbf{Theorem \ref{301}}]
Given a $N_{k,n}$, $k>2$, type generalized Cartan matrix $A$, there exists some $N_{k-1,n-1}$ type generalized Cartan matrices as principal submatrices.
\end{thm}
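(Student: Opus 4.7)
The plan is to argue by contradiction, directly unwinding the recursive definition of type $N_k$. Suppose $A$ is of type $N_{k,n}$ with $k>2$, but that no principal submatrix of $A$ of dimension $n-1$ is of type $N_{k-1}$. Since $A$ is of type $N_k$, every proper principal submatrix of $A$ has type $N_m$ for some $m \le k-1$. Combined with the contradiction hypothesis, this forces every principal submatrix of dimension exactly $n-1$ to have type $N_m$ with $m \le k-2$.

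The second step is to propagate this bound down to all smaller proper principal submatrices. Given any proper principal submatrix $B$ of $A$ of dimension strictly less than $n-1$, I extend the index set of $B$ by any choice of additional indices (possible because $B$ has dimension at most $n-2$) to obtain a principal submatrix $B'$ of $A$ of dimension $n-1$ containing $B$ as a principal submatrix. Since $B'$ has type $N_m$ with $m \le k-2$ by the previous step, the recursive definition of $N_m$ forces $B$ to have type $N_{m'}$ with $m' < m \le k-2$. Hence \emph{every} proper principal submatrix of $A$ has type $N_m$ with $m \le k-2$.

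Combining this with the fact that $A$ is of type $N_k$---so in particular $A$ is not of type $N_m$ for any $m < k$, and hence not for any $m < k-1$---I conclude that $A$ itself satisfies the defining conditions for type $N_{k-1}$. This contradicts the assumption that $A$ is of type $N_k$, since the types $N_k$ are mutually exclusive by construction (the clause ``not of type $N_m$ for $m<k$'' built into the definition guarantees uniqueness).

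The main obstacle I anticipate is bookkeeping rather than substance: one must handle the edge case $k=3$ (where $k-2=1$ is the affine type, so the hypothesis on proper submatrices degenerates into the classical affine condition) and one should make explicit the mutual exclusivity of the types $N_k$, perhaps as a short preliminary lemma derived from the recursive definition. Once that is in place, the proof is a direct two-layer unwinding of the recursive definition, with no nontrivial combinatorial input beyond the obvious fact that any proper principal submatrix of dimension $\le n-2$ is contained in one of dimension $n-1$.
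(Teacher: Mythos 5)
Your proposal is correct at the same level of rigor as the paper, but it is organized differently. The paper starts from the existence clause of Definition \ref{203}: an $N_{k}$ matrix $A$ contains some $N_{k-1}$ principal submatrix, say of dimension $\ell$; if $\ell<n-1$, extending that submatrix by one further index produces a proper principal submatrix of $A$ which properly contains an $N_{k-1}$ block and therefore has type at least $N_{k}$, contradicting the requirement that every proper principal submatrix of an $N_{k}$ matrix has type $N_{m}$ with $m<k$. So the paper locates the contradiction at an intermediate submatrix of too-high type. You instead negate the conclusion at dimension $n-1$, propagate the bound downward (any smaller principal submatrix sits inside one of dimension $n-1$) to conclude that every proper principal submatrix of $A$ has type at most $N_{k-2}$, and then apply the ``if'' direction of Corollary \ref{204} with parameter $k-1$ to deduce that $A$ itself would be of type $N_{k-1}$, contradicting mutual exclusivity of the types; notably you never invoke the existence clause of Definition \ref{203}. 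Both routes are elementary unwindings of the recursion: yours makes the dependence on Corollary \ref{204} and on exclusivity explicit (you are right that exclusivity deserves a short preliminary lemma --- it is tacitly used in the paper's proof as well), while the paper's version is shorter and needs only the local fact that a matrix properly containing an $N_{k-1}$ principal submatrix cannot have type below $N_{k}$. One cosmetic remark: in your propagation step the assertion that a proper principal submatrix of an $N_{m}$ matrix has type strictly less than $m$ fails for $m=0$ (finite inside finite), but the bound you actually need, type at most $k-2$, still holds there because $k>2$, so nothing breaks.
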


This property tells us that every GCM of type $N_k$ contains a smaller GCM of type $N_{k-1}$. On the other hand, in the next theorem, we start from a smaller GCM and construct larger GCMs.

\begin{thm}(Theorem \ref{302})
For a $N_{n-1,n}$, $n\geq2$, type generalized Cartan matrix, by adding one more dimension, we can obtain a $N_{n,n+1}$ type generalized Cartan matrix if and only if entries $a_{nj}$ for $j\in \{1,2,...,n-1\}$ are in the following several cases:
\begin{enumerate}
\item $a_{nj} = a_{jn} = 0$;
\item $a_{nj}=a_{jn}=-1$;
\item $a_{nj}=-1$, and $a_{jn}=-2$, $-3$, or $-4$;
\item $a_{nj}$ = $a_{jn}$ = $- 2$,
\item not all entries are $0$.
\end{enumerate}
\end{thm}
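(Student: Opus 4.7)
\emph{Setup and necessity.} Write $A'$ for the $(n+1)$-dimensional extension, indexing the new row/column by $n+1$, so the entries in question are the pairs $(a_{(n+1)j},a_{j(n+1)})$ for $j\in\{1,\ldots,n\}$. The classical classification of $2\times 2$ generalized Cartan matrices is the central input: such a matrix is finite, affine, or hyperbolic according to whether its off-diagonal product is $\leq 3$, $=4$, or $\geq 5$. For the forward implication, suppose $A'$ is of type $N_{n,n+1}$. The recursive clause of the $N_k$ definition forces every proper principal submatrix of a type-$N_k$ matrix to be of type at most $N_{k-1}$; iterating $n-1$ times from $A'$, every $2\times 2$ principal submatrix of $A'$ is of type at most $N_1$, hence finite or affine. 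Applied to the submatrix formed by row/col $j$ and row/col $n+1$ this yields $a_{(n+1)j}\,a_{j(n+1)}\leq 4$, and enumerating integer pairs satisfying this together with the GCM axioms (nonpositive off-diagonals, $a_{ij}=0\iff a_{ji}=0$) produces exactly cases (1)--(4). If every pair were $(0,0)$, then $A'$ would decompose as $A\oplus(2)$ and fail to be indecomposable, which rules it out of the $N_n$ classification; hence (5) is needed.

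\emph{Sufficiency.} Conversely, assume (1)--(5) hold. To verify $A'$ is of type $N_{n,n+1}$, I check the two clauses of the definition. First, $A'$ is not of type $N_m$ for any $m\leq n-1$: otherwise every proper principal submatrix of $A'$, including $A$ itself, would be of type $N_{m'}$ with $m'<m\leq n-1$, contradicting $A\in N_{n-1,n}$. Second, every proper principal submatrix $B$ of $A'$ is of type $N_m$ with $m<n$. If $B$ avoids the new row/col, then $B$ is a proper principal submatrix of $A$, of type at most $N_{n-2}$ by the $N_{n-1}$ property of $A$. If $B$ contains the new row/col, I argue by induction on the dimension of $B$: the $2\times 2$ constraints (1)--(4) supply the base, and the inductive step uses Theorem~\ref{301} applied to $A$ (or a suitable restriction) to exhibit a principal submatrix of $B$ of predictable smaller type, forcing the type of $B$ to be at most $N_{n-1}$.

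\emph{Main obstacle.} The hardest step is the sufficiency analysis for submatrices containing the new row/col: conditions (1)--(4) are $2\times 2$ in nature, yet one must control the type of arbitrary-dimensional principal submatrices. The plan is a double induction on $n$ and on the submatrix dimension, with base case $n=2$ handled by the explicit classification of rank-$3$ hyperbolic generalized Cartan matrices and the inductive step leveraging Theorem~\ref{301} at each descent. A secondary bookkeeping subtlety is that case (3) is asymmetric and tacitly covers both orderings $(a_{(n+1)j},a_{j(n+1)})=(-1,-k)$ and $(-k,-1)$ for $k\in\{2,3,4\}$, which must be accounted for consistently throughout the argument.
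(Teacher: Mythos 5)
Your proposal is correct and follows essentially the same route as the paper: necessity comes from the fact that the type must strictly drop when passing to proper principal submatrices (so a hyperbolic $2\times 2$ block is impossible in a $N_{n,n+1}$ matrix), and sufficiency combines the presence of the $N_{n-1,n}$ submatrix $A$ (ruling out types below $N_n$) with the descent of Theorem \ref{301} down to a forbidden $N_{2,2}$ principal submatrix (ruling out types above $N_n$). You are somewhat more explicit than the paper about condition (5) (indecomposability) and about the reversed orientation in case (3), but the underlying argument is the same.
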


Besides, we provide the upper and lower bounds of dimensions of generalized Cartan matrices. The bounds are varied and specific to the type of the generalized Cartan matrices. Specifically, we provide examples to prove the supremum and infimum dimensions of compact and non-compact $N_3$ type generalized Cartan matrices. These results will help us construct the generalized Cartan matrices and better understand the higher dimension generalized Cartan matrices.

\subsection{Structure of the paper}
In \S 2, we provide basic definitions of generalized Cartan matrices and the corresponding diagrams. In \S 3, we describe the general methods for constructing new types of generalized Cartan matrices. In \S 4, we discuss the upper and lower bounds of the dimensions of $N_k$ type generalized Cartan matrices. Specifically, we provide the supremum of dimensions of compact and non-compact $N_{3}$ type generalized Cartan matrices.

\section{definition}

A matrix $A=(a_{ij})_{1 \leq i,j \leq \ell}$ is a \textit{generalized Cartan matrix}, if it satisfies the following conditions,
\begin{enumerate}
\item the entries of $A$ are integers;
\item $a_{ii}=2$;
\item $a_{ij}<0$, $i\neq j$;
\item $a_{ij}=0$ then $a_{ji}=0$.
\end{enumerate}

Correspondingly, for a generalized Cartan matrix $A$, a \textit{Dynkin diagram} $D=(V,E)$ can be constructed. The set of vertices $V = \{v_{1}, \cdots , v_{\ell}\}$ corresponds to the columns (or rows) of $A$, and the set of edges $E_{ij}$ between $v_{i}$ and $v_{j}$ depends on the entries $a_{ij}$, and $a_{ji}$ of $A$. The edges between $v_{i}$ and $v_{j}$ can be characterized as follows:
\begin{enumerate}
\item No edge: if $a_{ij} = a_{ji} = 0$;
\item Single edge: if $a_{ij}=a_{ji}=-1$;
\item Directed arrow with double, triple or quadruple edges (asymmetric): if $a_{ij}$ = $-1$, and $a_{ji}$  = $-2, -3, or -4$;
\item Double edges with double-headed arrow (symmetric):if $a_{ij}$ = $a_{ji}$ = $- 2$;
\item Labeled edge (symmetric or asymmetric): if it is none of the above types, and $a_{ij}$ = $- a$, $a_{ji}$ = $- b$, where $a, b\in \mathbb{Z}_{+}$.
\end{enumerate}

Meanwhile we define the \emph{multiplicity} as $\text{mult}(i,j):=a_{ij}/a_{ji}$.

Based on the research done on the generalized Cartan matrices, they are classified into finite, affine, indefinite types. Furthermore, we can separate hyperbolic type out of the indefinite type of generalized Cartan matrices.

\begin{defn}\label{201}
Let $A$ is a a real $n\times n$ generalized Cartan matrix. Then one and only one of the following possibilities hold for both $A$ and $A^{t}$:
\begin{enumerate}
\item \textit{Finite}, det$A\neq 0$; there exists $u>0$ such that $Au>0$; $Au>0$ implies $u>0$ or $u=0$;
\item \textit{Affine}, corank $A=1$; there exists $ u>0$ such that $Au=0$ ; $Au\geq 0$ implies $Au=0$;
\item \textit{Indefinite}, there exists $u>0$ such that $Au<0$; $Au\geq 0$, $u\geq 0$ implies $u=0$;
\item \textit{hyperbolic}, $D$ is neither of finite nor affine type, but every proper connected subdiagram is either of finite or of affine type.
\end{enumerate}
\end{defn}

In the definition, the notation $Au>0$ means that every entry in $Au$ is larger than $0$, $Au=0$ means that every entry in $Au$ is $0$, and $Au<0$ means that every entry in $Au$ is smaller than $0$. $u>0$, $u=0$, $u<0$ are similar.

The definitions of finite, affine and indefinite Kac-Moody algebra follow from \cite[Theorem 4.3]{Kac} which is equivalent.

From the perspective of root decomposition, we only focus on the symmetrizable indecomposable indefinite generalized Cartan matrix in this paper.

For a generalized Cartan matrix $A$ to be \emph{indecomposable}, $A$ cannot be written as a block diagonal matrix, $A = diag(A_{1},A_{2})$ up to reordering of the rows and columns, where $A_{1}$, $A_{2}$ are generalized Cartan matrices. Correspondingly, the diagram should be connected.

Meanwhile, for a generalized Cartan matrix $A$ to be\textit{ symmetrizable}, there exist nonzero rational numbers $d_{1},...,d_{\ell}$, where $D=diag(d_{1},...,d_{\ell})$, such that the matrix $DA$ is symmetric. The matrix $DA$  is called a \textit{symmetrization} of $A$.

Now based on the definition of the symmetrisability, we have the following proposition.
\begin{prop}[Proposition 2.2 in \cite{Carb}]\label{202}
Let $A=(a_{ij})_{1 \leq i, j\leq \ell}$ be a generalized Cartan matrix. Then A is symmetrizable if and only if $a_{i_{1}i_{2}}a_{i_{2}i_{3}}...a_{i_{k}i_{1}}=a_{i_{2}i_{1}}a_{i_{3}i_{2}}...a_{i_{1}i_{k}}$
for each $i_{1},...,i_{k}\in\{1,...,\ell\}$, $k\leq \ell$, $i_{s}\neq i_{s+1}$ for $s$ ${\rm mod}$ $k$.
\end{prop}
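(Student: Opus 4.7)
The plan is to prove the two implications separately. The forward direction is a direct algebraic manipulation from the definition of symmetrization; the converse requires constructing the diagonal entries $d_i$ by walking along paths in the Dynkin diagram and then using the cycle hypothesis to verify that the construction is consistent.

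For the ``only if'' direction, I would assume $A$ is symmetrizable via $D=\mathrm{diag}(d_1,\ldots,d_\ell)$ with nonzero rational entries, so that $d_i a_{ij}=d_j a_{ji}$ for every pair $i,j$. Given any admissible sequence $i_1,\ldots,i_k$, I multiply together the $k$ equalities $d_{i_s} a_{i_s i_{s+1}} = d_{i_{s+1}} a_{i_{s+1} i_s}$ with indices taken modulo $k$. Both sides accumulate the same nonzero product $d_{i_1}\cdots d_{i_k}$, which cancels to leave exactly the asserted cycle identity.

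For the converse, I would first reduce to the indecomposable case by treating each connected component of the Dynkin diagram separately, since the symmetrization requirement is block diagonal. In the connected case, fix a spanning tree $T$ of the diagram, set $d_1=1$, and for each vertex $j$ define
$$d_j = \prod_{s=0}^{m-1}\frac{a_{p_s p_{s+1}}}{a_{p_{s+1} p_s}},$$
where $1=p_0,p_1,\ldots,p_m=j$ is the unique path from $1$ to $j$ in $T$. Each factor is a ratio of two negative integers, hence a positive rational, so $d_j$ itself is a positive rational, as required.

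It then remains to verify $d_i a_{ij}=d_j a_{ji}$ for every pair $(i,j)$. When $a_{ij}=0$ both sides vanish by the GCM axiom $a_{ij}=0\Rightarrow a_{ji}=0$, and when the edge $(i,j)$ lies in $T$ the identity is immediate from the definition of $d_j$. The main obstacle I expect is the off-tree case: for an edge $(i,j)\notin T$ with $a_{ij}\neq 0$, I would form the cycle consisting of this edge together with the tree path from $j$ back to $i$, apply the hypothesis of the proposition to this cycle, and then telescope the defining products of $d_i$ and $d_j$ against the cycle identity so that the intermediate ratios of $a$-entries cancel and leave precisely $d_i a_{ij}=d_j a_{ji}$. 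This is combinatorial bookkeeping rather than a conceptual hurdle, but it is the step where the cycle hypothesis is genuinely invoked and must be set up carefully.
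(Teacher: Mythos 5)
The paper gives no proof of Proposition \ref{202}: it is quoted directly from \cite{Carb} (Proposition 2.2 there, going back to Kac's Exercise 2.1), so there is no internal argument to compare yours against. Your proof is correct and is essentially the standard one from that literature: multiplying the relations $d_{i_s}a_{i_si_{s+1}}=d_{i_{s+1}}a_{i_{s+1}i_s}$ around a cycle and cancelling the nonzero $d$'s gives the forward direction, while for the converse the spanning-tree path products produce a legitimate positive rational diagonal $D$, and applying the cycle hypothesis to an off-tree edge $(i,j)$ together with the tree path joining $j$ to $i$ telescopes, exactly as you indicate, to $d_ia_{ij}=d_ja_{ji}$ (all entries involved being nonzero, so the divisions are valid). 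The only point worth spelling out is the tree-edge case: the path from the root to one endpoint need not extend the path to the other, but whichever of $i,j$ lies on the tree path to the other, the defining product gives the symmetric relation in either orientation, so the verification goes through.
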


Now based on the finite, affine and indefinite types of generalized Cartan matrices, we use the idea of mathematical induction to further classify the indefinite type of generalized Cartan matrices.

Following the definition of the hyperbolic type, we consider separating out other types of generalized Cartan matrices from the indefinite type.

\begin{defn}\label{203}
The set of generalized Cartan matrices of $N_{0}$ type is the set of the matrices of finite type. The set of generalized Cartan matrices of $N_{1}$ type is the set of the matrices of affine type. The set of generalized Cartan matrices of $N_{2}$ type is the set of the matrices of hyperbolic type.
A matrix $A $ is called of \textit{$N_{k}$  type}, $k \geq 3$, if it contains at least one principal submatrix of $N_{k-1}$ type, and every other principal submatrix is of $N_{m}$ type where $m<k$.
\end{defn}

Note that a hyperbolic generalized Cartan matrix is neither of finite nor affine type, but every proper connected submatrix is either of finite or of affine type.

\begin{cor}\label{204}
A generalized Cartan matrix $A$ is of type $N_k$, $k>1$, if $A$ is not of type $N_m$ for all $m < k$, but every proper connected submatrix is of type $N_{m'}$ for some $0 \geq m' < k$.
\end{cor}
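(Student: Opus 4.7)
The plan is to reduce the claim directly to Definition \ref{203} (and to Definition \ref{201}(4) in the base case) by a maximum/induction argument on the recursive structure of the $N_k$ classes. Assume $A$ satisfies the two hypotheses: (i) $A$ is not of type $N_m$ for any $m<k$, and (ii) every proper connected principal submatrix of $A$ is of type $N_{m'}$ for some $m'<k$.

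First I would dispose of the base case $k=2$. Hypothesis (i) says $A$ is neither finite nor affine, and (ii) with $m'\in\{0,1\}$ says every proper connected subdiagram is finite or affine. This is exactly Definition \ref{201}(4), so $A$ is hyperbolic, i.e.\ of type $N_2$.

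For $k\geq 3$, let $j:=\max\{m':\text{some proper connected principal submatrix of }A\text{ is of type }N_{m'}\}$. By (ii) we have $j\leq k-1$. The main step is to show $j=k-1$, for then $A$ possesses a proper principal submatrix of type $N_{k-1}$ and every other proper principal submatrix is of type $N_m$ with $m<k$, so Definition \ref{203} gives $A\in N_k$. Suppose toward contradiction that $j<k-1$. Then every proper connected submatrix of $A$ is of type $N_m$ with $m\leq j$. If $j\geq 2$, Definition \ref{203} applied with $j+1$ in place of $k$ shows that $A$ itself meets the criteria for $N_{j+1}$: the required submatrix of type $N_j$ exists by maximality of $j$, and every other proper submatrix has type $<j+1$. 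If $j\in\{0,1\}$, then every proper connected subdiagram is finite or affine, while by (i) $A$ is neither finite nor affine, so Definition \ref{201}(4) places $A$ in $N_2$. In either case $A$ is of type $N_{j+1}$ with $j+1\leq k-1<k$, contradicting (i). Hence $j=k-1$, completing the argument.

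The one subtle point --- and what I expect to be the main obstacle --- is reconciling the existential flavor of Definition \ref{203} (``contains at least one principal submatrix of type $N_{k-1}$'') with the universal flavor of the corollary (``every proper connected submatrix has type $<k$''). The maximum argument above is precisely what bridges these two formulations; its validity rests on the implicit but essential fact that the classes $N_0,N_1,N_2,\dots$ are pairwise disjoint, which follows inductively from the trichotomy in Definition \ref{201} and the strict inequality $m<k$ in the recursion. I would briefly flag this disjointness at the start of the argument so the final contradiction ``$A$ is of type $N_{j+1}$ with $j+1<k$, contrary to (i)'' lands cleanly.
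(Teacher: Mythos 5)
Your argument is correct, but it is organized quite differently from the paper's. The paper proves Corollary \ref{204} by induction on $k$, taking the case $N_3$ as the base, assuming the two descriptions agree for $N_k$, and then unwinding Definition \ref{203} for $N_{k+1}$ in both directions to claim full equivalence of the definitions. You instead give a direct, non-inductive proof of the stated ``if'' direction: you handle $k=2$ explicitly via Definition \ref{201}(4) (which the paper never does, despite stating the corollary for $k>1$), and for $k\geq 3$ you introduce the extremal quantity $j$, the largest type occurring among proper connected principal submatrices, and show $j=k-1$ by observing that $j<k-1$ would force $A$ itself into the class $N_{j+1}$ (via Definition \ref{203} when $j\geq 2$, via Definition \ref{201}(4) when $j\leq 1$), contradicting hypothesis (i). This extremal argument localizes exactly where each hypothesis is used and makes explicit the one real subtlety, namely that ``type'' must be well defined (the classes $N_0,N_1,N_2,\dots$ pairwise disjoint) for the bound $j\leq k-1$ to follow from (ii); the paper's induction silently assumes this. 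What the paper's route buys, in principle, is the converse direction as well (it asserts an equivalence of definitions rather than only the implication stated), though its write-up of both steps is loose. One caveat that applies equally to your proof and to the paper's: Definition \ref{203} quantifies over all principal submatrices while the corollary's hypothesis (ii) only controls the connected ones, so a fully rigorous version of either argument needs a remark on how decomposable principal submatrices are typed; since you match the paper's level of care on this point, it is not a gap specific to your proposal.
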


\begin{proof}
The equivalence relation can be proved by mathematical induction on $k$ in $N_{k}$. Firstly, it is obvious that both definitions stand for the $N_{3}$ type. Now, we assume they are equivalent for $N_{k}$ type where $3<k\in\mathbb{Z}$. Then based on Definition \ref{203} for $N_{k+1}$ type, if the generalized Cartan matrix $A$ has $N_{k}$ type principal submatrices, then where $m<k+1$, we have $A$ is not of $N_{m}$ type by alternate definition of $N_{m}$ type, where $m<k+1$. Since the second part of the two definitions are the same, we have the equivalence relation of the two definitions.

Conversely, based on alternate definition for $N_{k+1}$ type, if generalized Cartan matrix $A$ is not of type $N_m$ for all $m < k+1$, then by Definition \ref{203} of $N_{m}$ type, where $m<k+1$, we have $A$ is of some $N_{m}$ type. Hence $A$ must contain some $N_{k-1}$ type principal submatrix. Therefore, two definitions are equivalent for $N_{k+1}$ type. Conclusively, the definitions are equivalent for all $k>3$.
\end{proof}
Now for a generalized Cartan matrix, we consider its \textit{dimension} as the number of columns (or rows) of the matrix. We denote $N_{k}$ type with dimension $n$ as $N_{k,n}$ type. Meanwhile, we define the principal submatrices of a $N_{k,n}$ type generalized Cartan matrix as the submatrices of some type $N_{k_{i},n_{i}}$ type, where $k_{i}<k$, $n_{i}<n$.

Due the fact that hyperbolic type generalized Cartan matrices does not necessarily contain affine type principal submatrices, we say a generalized Cartan matrix is of \textit{compact type} if no principal submatrix is of affine type.

\section{$N_k$-type Indefinite Kac-Moody Algebras}

In the section, we provide two ways of constructions for $N_{k+1,n+1}$ type generalized Cartan matrices based on the $N_{k,n}$ type generalized Cartan matrices.

\subsection{Inductive Method}
The key idea of the first construction is based on the idea of mathematical inductions. Before we introduce the construction and the proof to the feasibility, we give the following theorem which gives reference to why we can constructions based on a generalized Cartan matrix.

\begin{thm}\label{301}
Given a $N_{k,n}$, $k>2$, type generalized Cartan matrix $A$, there exists some $N_{k-1,n-1}$ type generalized Cartan matrices as principal submatrices.
\end{thm}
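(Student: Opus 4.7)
The plan is to argue directly from Definition \ref{203}: the $N_{k-1}$ principal submatrix guaranteed to exist inside $A$ can always be enlarged to an $(n-1)$-dimensional principal submatrix that is still of type $N_{k-1}$.

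First I would apply Definition \ref{203} to $A$: since $A$ is of type $N_{k}$ with $k\geq 3$, there exists a proper principal submatrix $B$ of type $N_{k-1}$, say indexed by a subset $S\subseteq\{1,\dots,n\}$ of size $m=\dim B$. If $m=n-1$, the submatrix $B$ itself witnesses the claim. Otherwise $m\leq n-2$, and I would extend $S$ to any $S'\subseteq\{1,\dots,n\}$ with $|S'|=n-1$ and $S\subseteq S'$. Let $A'$ denote the principal submatrix of $A$ indexed by $S'$. Then $A'$ is a proper principal submatrix of $A$, and $B$ is in turn a proper principal submatrix of $A'$.

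The second step is to identify the type of $A'$. By Definition \ref{203} applied to $A$, we know $A'$ is of type $N_{j}$ for some $j<k$. I would rule out $j\leq k-2$ by contradiction: if $j\leq k-2$, then every proper principal submatrix of $A'$ is of type $N_{j'}$ for some $j'<j\leq k-2$. (For $j\geq 2$ this is immediate from Definition \ref{203}; for $j\in\{0,1\}$ it is the classical fact that proper principal submatrices of finite or affine GCMs are of finite type, i.e.\ of type $N_{0}$.) In particular $B\subsetneq A'$ would be of type $N_{j'}$ with $j'\leq k-3<k-1$, contradicting the fact that $B$ is of type $N_{k-1}$. Hence $j=k-1$, and $A'$ is the desired $N_{k-1,n-1}$-type principal submatrix of $A$.

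The only delicate point I anticipate is the handling of the base case $k=3$, where the downward closure of type under taking proper principal submatrices is not part of Definition \ref{203} but rather a structural fact about finite and affine GCMs that must be imported from Kac--Moody theory. Once that is in hand, the rest of the argument is a direct unwinding of the inductive definition and requires no computation with the entries of $A$.
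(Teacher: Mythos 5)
Your proposal is correct and follows essentially the same route as the paper: both arguments enlarge the $N_{k-1}$-type principal submatrix guaranteed by Definition \ref{203} and use the same squeeze, namely that every proper principal submatrix of $A$ has type $N_{j}$ with $j<k$, while a GCM properly containing a type-$N_{k-1}$ principal submatrix cannot be of type $N_{j}$ for any $j\leq k-1$. The only cosmetic difference is that the paper applies this to a one-dimension enlargement to force, by contradiction, that the $N_{k-1}$ submatrix itself already has dimension $n-1$ (which also yields Corollary \ref{3001}), whereas you enlarge to dimension $n-1$ and pin down the type of that enlargement directly; the underlying monotonicity fact, including the finite/affine base case you flag, is identical.
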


\begin{proof}
Let $A$ be a $N_{k,n}$, $k\geq 2$, type generalized Cartan matrix. By Definition \ref{203}, the matrix $A$ must contain some $N_{k-1}$ type  principal submatrices. Let the dimension of the $N_{k-1}$ type principal submatrix to be some $\ell\in \mathbb{Z}^+$. If $\ell<n-1$, then adding one more dimension to the matrix gives a generalized Cartan matrix containing $N_{k-1}$ type generalized Cartan matrices as principal submatrices. By Corollary \ref{204}, the new generalized Cartan matrix must be of type $N_{k-1+m,\ell +1}$, where $m\geq 1$ and $m>1$ if and only if there are principal submatrices of type higher than $N_{k-1}$. This contradicts to Definition \ref{203} of $N_{k}$ type generalized Cartan matrix. Hence the $N_{k-1}$ type principal submatrices can only have dimension $n-1$.
\end{proof}

The proof to Theorem \ref{301} also gives proof to the following corollary.

\begin{cor}\label{3001}
For a $N_{k,n}$, $k\geq 2$, type generalized Cartan matrix, the $N_{k-1}$ type principal submatrices have dimension at least $n-1$.
\end{cor}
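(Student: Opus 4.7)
The plan is to reproduce almost verbatim the dimension-count that already appears inside the proof of Theorem \ref{301}. That theorem asserts the existence of at least one $N_{k-1,n-1}$-type principal submatrix of $A$, but the argument it uses actually rules out every smaller dimension, which is exactly the statement of the corollary.

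First I would let $B$ be an arbitrary principal submatrix of $A$ of type $N_{k-1}$, and denote its dimension by $\ell \in \mathbb{Z}^{+}$. Suppose for contradiction that $\ell < n-1$. Pick any index $i \in \{1,\dots,n\}$ not already used in defining $B$, and let $B'$ be the principal submatrix of $A$ obtained by adjoining the $i$-th row and column to those of $B$. Then $B'$ has dimension $\ell+1 \leq n-1$, so $B'$ is a proper principal submatrix of $A$ that contains $B$ as a principal submatrix.

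Next I would apply Corollary \ref{204} to $B'$. Since $B'$ contains the $N_{k-1}$-type matrix $B$, it cannot itself be of type $N_m$ for any $m<k-1$; hence $B'$ is of type $N_{k-1+s}$ for some integer $s \geq 0$. The case $s=0$ would force the proper principal submatrix $B$ of $B'$ to be of type $N_{m'}$ with $m' < k-1$, contradicting that $B$ is of type $N_{k-1}$; so $s \geq 1$, and $B'$ is of some type $N_j$ with $j \geq k$. On the other hand, because $A$ is of type $N_{k,n}$, Definition \ref{203} requires every proper principal submatrix of $A$ to be of type $N_m$ with $m<k$, contradicting $j \geq k$. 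This forces $\ell \geq n-1$.

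I do not expect a genuine obstacle here; the argument is essentially a repackaging of what is already inside the proof of Theorem \ref{301}. The only point worth double-checking is the borderline case $k=2$, where $N_{k-1}=N_1$ (affine); there the same argument goes through, using the standard fact that a proper connected principal submatrix of an affine Cartan matrix is of finite type, so the case $s=0$ is again impossible. The corollary therefore holds for all $k \geq 2$.
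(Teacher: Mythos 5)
Your proposal is correct and is essentially the paper's own argument: the paper proves Corollary \ref{3001} simply by noting that the contradiction argument inside the proof of Theorem \ref{301} (extend a supposed $N_{k-1}$ submatrix of dimension $\ell<n-1$ by one index, then use Corollary \ref{204} and Definition \ref{203} to get a proper principal submatrix of type $N_j$ with $j\geq k$) already rules out every dimension below $n-1$. Your explicit treatment of the $k=2$ case is a small added care, but the route is the same.
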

This is important since it helps to verify the matrices constructed.

Because all $N_{k,n}$ type generalized Cartan matrices can be constructed based on $N_{k-1,n-1}$ type generalized Cartan matrices, we consider the algorithm constructing $N_{k+1,n+1}$ type generalized Cartan matrix as adding one more dimension on a $N_{k,n}$ type generalized Cartan matrix as following theorem,

\begin{thm}\label{302}
For a $N_{n-1,n}$, $n\geq2$, type generalized Cartan matrix, by adding one more dimension, we can obtain a $N_{n,n+1}$ type generalized Cartan matrix if and only if entries $a_{nj}$ for $j\in \{1,2,...,n-1\}$ are in the following several cases:
\begin{enumerate}
\item $a_{nj} = a_{jn} = 0$;
\item $a_{nj}=a_{jn}=-1$;
\item $a_{nj}=-1$, and $a_{jn}=-2$, $-3$, or $-4$;
\item $a_{nj}$ = $a_{jn}$ = $- 2$;
\item not all entries are $0$.
\end{enumerate}
\end{thm}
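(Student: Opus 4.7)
The plan is to prove both directions of the equivalence, using Theorem \ref{301} and Corollary \ref{204} to track how the $N_k$-type propagates across principal submatrices. I would induct on $n$, with the base case $n=2$ corresponding to extending an affine rank-$2$ matrix to a hyperbolic rank-$3$ matrix, which can be checked directly by enumerating the finitely many allowed edge types and verifying that each $2\times 2$ principal submatrix of the extension remains finite or affine.

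For the necessity direction, assume the extended $(n{+}1)\times(n{+}1)$ matrix $B$ is of type $N_{n,n+1}$. Condition (5) is immediate from the standing assumption of indecomposability: the new vertex must have at least one nonzero connection to some existing vertex for the Dynkin diagram to remain connected. To derive (1)--(4), I argue by contradiction. Suppose the pair $(a_{n+1,j_0}, a_{j_0,n+1})$ is a labeled edge, i.e., its product is at least $5$. Then the $2\times 2$ principal submatrix of $B$ on $\{j_0, n+1\}$ is indefinite, and being $2$-dimensional it is necessarily of type $N_2$. I would then build up a proper principal submatrix of $B$ whose $N_k$-type is at least $N_n$, contradicting the requirement that all proper principal submatrices of $B$ have type $N_m$ with $m<n$. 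The construction proceeds by applying Theorem \ref{301} and Corollary \ref{3001} iteratively to $A$ to obtain a descending chain $A \supset A^{(1)} \supset \cdots$ of principal submatrices of types $N_{n-1}, N_{n-2}, \ldots$, and then using the vertices of this chain together with $j_0$ and $n+1$ to construct principal submatrices of $B$ of increasing dimension; at each step Corollary \ref{204} forces the $N$-type to strictly increase, yielding the desired contradiction at dimension $n$.

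For the sufficiency direction, assume each pair $(a_{n+1,j}, a_{j, n+1})$ lies in one of the standard types (1)--(4) and not all are zero. I must verify the two defining conditions for $B$ to be of type $N_{n,n+1}$. First, $B$ itself is not of type $N_m$ for $m<n$: since $B$ properly contains $A$, if $B$ were of some low type $N_m$ with $m<n$, then by Corollary \ref{204} the proper principal submatrix $A$ would be of type $N_{m'}$ with $m' < m \leq n-1$, contradicting that $A$ is of type $N_{n-1}$. Second, every proper principal submatrix of $B$ is of type $N_m$ for some $m<n$. Submatrices omitting the new vertex are principal submatrices of $A$ and hence of type $N_m$ for $m<n-1$. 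For submatrices containing the new vertex, I proceed inductively on dimension: such a submatrix is obtained by extending some principal submatrix $A'\subset A$ of type $N_{m'}$ (with $m' \leq n-1$) by the new vertex, with edges that again fall into cases (1)--(4); either the new vertex is isolated within this subdiagram (in which case the submatrix factors and one reduces to $A'$, which is of low type), or the inductive hypothesis applies to yield a type strictly below $n$.

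The main technical obstacle lies in the necessity direction: controlling how the $N_k$-type grows along the chain of principal submatrices built from a hyperbolic $2\times 2$ piece. In particular, one must verify that each intermediate principal submatrix actually sits in the $N_k$-hierarchy rather than being an unclassified indefinite matrix, since Corollary \ref{204} only applies to matrices already known to be of some $N_k$-type. This amounts to checking that the extensions built from the chain produced by Theorem \ref{301} remain minimal in the sense of the tower $N_{k,k+1}$, and it is where the bulk of the careful combinatorial bookkeeping will live.
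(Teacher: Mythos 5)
Your necessity argument is essentially the paper's: a pair outside cases (1)--(4) gives a hyperbolic $2\times 2$ principal submatrix, and building principal submatrices up from it forces the $N$-type to increase with each added dimension (the paper runs the chain up to the full matrix being $N_{n+1,n+1}$, you stop at a proper submatrix of type at least $N_n$), contradicting Definition \ref{203}. Your closing worry about intermediate matrices not lying in the hierarchy is moot there: since the ambient matrix is assumed of type $N_{n,n+1}$, Definition \ref{203} already guarantees every principal submatrix is of some type $N_m$ with $m<n$, so Corollary \ref{204} applies at each step.

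The gap is in your sufficiency direction. You verify Definition \ref{203} directly, and for proper principal submatrices containing the new vertex you extend some $A'\subset A$ of type $N_{m'}$ and invoke ``the inductive hypothesis.'' But the statement you are inducting on (the theorem for smaller $n$) only governs extensions of matrices of type $N_{m-1,m}$, i.e.\ type equal to dimension minus one, whereas $A'$ is typically not of that form --- it can be finite or affine of dimension up to $n-1$, or hyperbolic of intermediate dimension --- so the appeal does not apply, and no other mechanism in your sketch bounds the type of $A'\cup\{n+1\}$ by $n-1$. Moreover, your hypotheses (1)--(4) only control the $2\times 2$ blocks involving the new vertex; you never establish that the $2\times 2$ principal submatrices inside $A$ are finite or affine (this holds because $A$ is of the minimal-dimension type $N_{n-1,n}$, via Corollary \ref{3002} or an increasing-chain argument, but it must be said). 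That fact is exactly the missing ingredient, and it is how the paper argues: if the extended matrix were of type $N_{k,n+1}$ with $k>n$, iterating Theorem \ref{301} produces a descending chain $N_{k-1,n}\supset N_{k-2,n-1}\supset\cdots$ terminating in a hyperbolic $2\times 2$ principal submatrix, contradicting that all $2\times 2$ blocks are finite or affine; and $k\le n-1$ is excluded by Corollary \ref{204} since the matrix contains $A$ of type $N_{n-1,n}$. Replacing your inductive step by this descending-chain argument (or, equivalently, proving the quantitative statement that every principal submatrix of the extension of dimension $d$ has type at most $d-1$, with the $2\times 2$ base case justified both inside and outside $A$) closes the gap; as written, the step would fail.
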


\begin{proof}
Note that $a_{nj}$, $a_{jn}$ forms a $N_{2,2}$ type principal submatrix if and only if it does not satisfy the above 4 cases. Assuming there is some $N_{n,n+1}$ type generalized Cartan matrix with a $N_{2,2}$ type principal submatrix, then adding one more dimension by Definition \ref{203} would give a $N_{3,3}$ type. Subsequently, by repeating the same process we can only get a $N_{n+1,n+1}$ type. This is contradictory to our assumption of a $N_{n,n+1}$ type generalized Cartan matrix. Hence all the dimension $2$ principal submatrices are either of finite or affine type. So the forward statement is true.

Conversely, by adding one more dimension following the requirements, all the dimension $2$ principal submatrices are either of finite type or affine type. The generalized Cartan matrix can not be of $N_{n-1,n+1}$ type by Corollary \ref{204}, since if it contains a $N_{n-1,n}$ type principal submatrix. By Theorem \ref{301}, the $N_{k,n+1}$ type, $k>n$, generalized Cartan matrix would contain a $N_{k-1,n}$ type principal submatrix. Repeating the same process, the $N_{k,n+1}$ type generalized Cartan matrix contains a $N_{2,2}$ type principal submatrix. This is contradictory to the fact that all principal submatrices are of finite or affine type. So it can not be of $N_{k,n+1}$, for some $k>n$ type. Hence the new generalized Cartan matrix can only be of $N_{n,n+1}$ type. So the backward statement is true.
\end{proof}

Observing the above ways of construction, we have the following corollary.

\begin{cor}\label{3002}
Given a generalized Cartan matrix $A$ of type $N_{k,k+a}$, $k\geq2$ and $a\in \mathbb{Z}^+$, all principal submatrices of $A$ with dimension smaller than $a+1$ are finite. The lowest dimension for the principal submatrices of hyperbolic type is $a+2$.
\end{cor}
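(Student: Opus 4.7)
The plan is to reduce both statements of the corollary to a single uniform bound relating the type and the dimension of any principal submatrix of $A$. Writing $n = k+a$, I will prove by reverse induction on $d$ that every principal submatrix $B$ of $A$ with $\dim B = d$ is of some type $N_m$ with $m \leq \max\{0,\, d - a\}$. Granted this inequality, both assertions are immediate: if $d < a+1$ then $d - a \leq 0$, forcing $m = 0$, so $B$ is of finite type; and if $B$ is hyperbolic (so $m = 2$), then $d \geq 2+a = a+2$.

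For the base case $d = n-1$, the submatrix $B$ is a proper principal submatrix of the $N_{k}$ matrix $A$, and Definition \ref{203} (equivalently Corollary \ref{204}) forces $m < k$; therefore $m \leq k-1 = (n-1) - a = d - a$, as required. For the inductive step, fix $1 \leq d < n-1$, assume the bound at dimension $d+1$, and let $B$ be a principal submatrix of dimension $d$. Adjoin any one row-column pair of $A$ whose index lies outside the index set of $B$ to obtain a principal submatrix $B'$ of dimension $d+1 \leq n-1$; since $B'$ is still a proper submatrix of $A$, the inductive hypothesis applies and gives $B'$ of some type $N_{m'}$ with $m' \leq (d+1) - a$. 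Because $B$ is a proper principal submatrix of $B'$, Corollary \ref{204} yields either $m' = 0$ (in which case $B$ is itself of finite type and we are done) or $m < m'$, whence $m \leq m' - 1 \leq d - a$.

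To confirm that the bound $a+2$ on the hyperbolic dimension is actually attained, I will iterate Theorem \ref{301}: starting from $A$ of type $N_{k,k+a}$ and peeling off one dimension at a time, one obtains a chain of principal submatrices of types $N_{k,k+a},\ N_{k-1,k+a-1},\ \ldots,\ N_{2,a+2}$, whose last term is a hyperbolic submatrix of dimension exactly $a+2$. The only point requiring real care in the main argument is the case split $m' = 0$ versus $m' \geq 1$ in the inductive step, which must be handled to avoid writing $m \leq -1$; once that bookkeeping is in place the proof reduces to a single reverse induction driven entirely by Corollary \ref{204}.
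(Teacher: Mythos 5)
Your proof is correct, but it takes a genuinely different route from the paper. You reduce both claims to a single uniform inequality --- every principal submatrix of dimension $d$ has type $N_m$ with $m \leq \max\{0,\, d-a\}$ --- proved by one reverse induction on $d$, driven by the single fact that proper principal submatrices of an $N_{m'}$ matrix (with $m'\geq 1$) have strictly smaller type; the attainment of dimension $a+2$ for a hyperbolic submatrix then comes from iterating Theorem \ref{301} down the chain $N_{k,k+a},\, N_{k-1,k+a-1},\ldots, N_{2,a+2}$. The paper instead runs a double induction, first on $k$ with $a$ fixed (base case $k=2$, $a=1$ via Theorem \ref{302}, step via Theorem \ref{301} and Corollary \ref{3001}) and then on $a$, with a separate discussion of the compact versus non-compact hyperbolic case to rule out low-dimensional affine submatrices. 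Your single quantitative bound makes the ``dimension $<a+1$ implies finite'' and ``hyperbolic implies dimension $\geq a+2$'' statements fall out of one statement and avoids the compactness case split entirely; the paper's argument stays closer to its constructive machinery (Theorem \ref{302}, Corollary \ref{3001}) at the cost of a more intricate induction. Two small points to tidy: for $m'=1$ the strict decrease is the standard fact that proper principal submatrices of an affine GCM are finite (not literally Corollary \ref{204}, which is stated for $k>1$), and for $k=2$ the ``chain'' is just $A$ itself, so the second claim holds only if $A$ counts as a principal submatrix of itself --- a convention the paper also uses implicitly. Neither affects the validity of your argument.
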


\begin{proof}
We prove the corollary by inductions on $k,a$ in $N_{k,k+a}$. When $k=2$, $a=1$, the statement is true by Theorem \ref{302}.

We assume the statement is true for $k=n\geq 2$, $a$ as some constant. Now we consider $k=n+1$. We know the $N_{n+1,n+a+1}$ type generalized Cartan matrices contain a $N_{n,n+a}$ type principal submatrix by Theorem \ref{301}. Furthermore, by Corollary \ref{3001}, we have that the lowest dimension of the $N_{n}$ type generalized Cartan matrices is $n+a$. Hence by the inductive definition, principal submatrices with dimension smaller than $a+1$ are finite. The lowest dimension for the principal submatrices of hyperbolic type is $a+2$. Therefore the statement is true for $k=n+1$. By mathematical induction, it is true for all $k\geq 2$ with fixed $a$.

We assume the statement is true for $k=n\geq 2$, $a=m>0$. Now we consider $N_{n,n+m+1}$ type generalized Cartan matrices when $a=m+1$. By Theorem \ref{301}, the $N_{n,n+m+1}$ type generalized Cartan matrix would contain a $N_{n-1,n+m}$ type principal submatrix. Repeating the same process, the $N_{n,n+m+1}$ type generalized Cartan matrix contains a $N_{2,m+n+1-n+2}$ type principal submatrix where $m+n+1-n+2=m+3=a+2$. By Corollary \ref{204}, the principal submatrices with dimension smaller than $m+2$ are either of affine or finite type. If the hyperbolic type is compact, the principal submatrices with dimension smaller than $m+2$ are of finite type. If it is non compact, then Theorem \ref{301} gives the result that affine type principal submatrices can only have dimension $a+1$. So principal submatrices with dimension smaller than $a+1$ are always finite. Therefore the statement is true for $a=m+1$. By mathematical induction, it is true for positive integer $a$.

Hence the corollary is true for all $k\geq2$ and $a\in \mathbb{Z}^+$.
\end{proof}

Furthermore by Theorem \ref{301}, each $N_{k,n}$, $k>2$, type generalized Cartan matrix has some $N_{k-1,n-1}$ type principal submatrices. Hence every $N_{k,n}$ type generalized Cartan matrix can be obtained by adding one more dimension to a $N_{k-1,n-1}$ type generalized Cartan matrix. We will discuss the ways of adding dimensions by giving examples in section $4$.

However, one drawback of the above algorithm is that we have to check the symmetrizability of the generalized Cartan matrices constructed. It could be troublesome. Hence we have the modified algorithm which ensures the symmetrizability of the generalized Cartan matrix constructed.

\subsection{Modified Method}
Before the construction, we prove the following lemma about the symmetrizability.

\begin{prop}\label{303}
For a generalized Cartan matrix $A$, it is symmetrizable if and only if each principal submatrix is symmetrizable.
\end{prop}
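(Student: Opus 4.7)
The plan is to deduce both directions from Proposition~\ref{202}, the cycle-condition characterization of symmetrizability.

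For the forward implication, I would begin with a symmetrizer $D = \mathrm{diag}(d_1, \dots, d_\ell)$ of $A$ and, for any index set $S = \{j_1, \dots, j_m\} \subseteq \{1, \dots, \ell\}$, take the restricted diagonal matrix $D_S = \mathrm{diag}(d_{j_1}, \dots, d_{j_m})$. A one-line computation
\[
(D_S A_S)_{pq} = d_{j_p} a_{j_p j_q} = d_{j_q} a_{j_q j_p} = (D_S A_S)_{qp},
\]
using symmetry of $DA$, shows that $D_S$ symmetrizes the principal submatrix $A_S$. This direction is essentially immediate once the correct restriction of $D$ is written down.

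For the backward implication, I would invoke Proposition~\ref{202} and reduce symmetrizability of $A$ to verifying the cycle identity $a_{i_1 i_2} a_{i_2 i_3} \cdots a_{i_k i_1} = a_{i_2 i_1} a_{i_3 i_2} \cdots a_{i_1 i_k}$ for every admissible sequence $i_1, \dots, i_k \in \{1, \dots, \ell\}$ with consecutive entries distinct. The key observation is that such an identity involves only entries indexed by $S := \{i_1, \dots, i_k\}$, so it lives entirely inside the principal submatrix $A_S$. By hypothesis $A_S$ is symmetrizable, and applying Proposition~\ref{202} in the reverse direction to $A_S$ produces exactly this identity. Running over all sequences gives every cycle condition for $A$, so $A$ itself is symmetrizable.

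I do not expect a real technical obstacle; the argument is a clean bookkeeping exercise relative to Proposition~\ref{202}. The only interpretive point is whether ``each principal submatrix'' is understood to include $A$ itself (in which case the backward direction is tautological by choosing $S = \{1, \dots, \ell\}$) or only proper ones; either way the cycle-by-cycle argument above applies uniformly, because every admissible cycle is supported on an index set of size at most $\ell$.
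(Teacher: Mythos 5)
Your forward direction is fine: restricting the symmetrizer $D$ to the index set $S$ immediately symmetrizes $A_S$, which is the same content as (and a bit more direct than) the paper's appeal to Proposition~\ref{202}.

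The gap is in the backward direction, and it sits exactly in the case you dismiss as a mere ``interpretive point.'' In this paper a principal submatrix of an $n$-dimensional matrix is a \emph{proper} one (see the convention stated after Corollary~\ref{204}), and that is the reading Proposition~\ref{303} is put to work with: in Construction~\ref{304} one certifies symmetrizability of the newly built matrix by checking its proper principal submatrices, which would be circular under the inclusive reading. With the proper reading your cycle-by-cycle argument does \emph{not} ``apply uniformly'': a cycle $i_1,\dots,i_k$ with $k=\ell$ and all indices distinct has support $S=\{1,\dots,\ell\}$, so $A_S=A$ itself, and symmetrizability of the proper principal submatrices says nothing about that identity. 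This is precisely the case the paper spends most of its proof on, attempting to recover the length-$\ell$ identity by multiplying together the cycle identities coming from all $(\ell-1)$-dimensional principal submatrices and extracting an $(\ell-1)$-th root; your proposal omits these full-support (Hamiltonian) cycles entirely. Moreover the omission cannot be repaired by bookkeeping alone: for a chordless cycle the length-$\ell$ condition is genuinely independent of the proper submatrices. For instance, the $3\times 3$ generalized Cartan matrix with $a_{12}=a_{21}=a_{23}=a_{32}=a_{13}=-1$ and $a_{31}=-2$ has every proper principal submatrix of size $2$, hence symmetrizable, yet $a_{12}a_{23}a_{31}=-2\neq -1=a_{21}a_{32}a_{13}$, so the matrix is not symmetrizable. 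So under the proper reading the backward implication needs a separate argument for full-support cycles (and, as this example shows, such an argument cannot exist in the stated generality), while under the inclusive reading it is a tautology that is useless for the paper's application; either way, your proof as written does not establish the proposition the paper actually uses.
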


\begin{proof}
For a generalized Cartan matrix $A=(a_{ij})_{1 \leq i,j \leq \ell}$, proving being symmetrizable is equivalent to proving
\begin{align*}
    a_{k_{1}k_{2}}a_{k_{2}k_{3}}...a_{k_{m}k_{1}}=a_{k_{2}k_{1}}a_{k_{3}k_{2}}...a_{k_{1}k_{m}}
\end{align*}
for any chosen positive integer $m\leq \ell$, where $k_{1},...,k_{m}\in\{1,...,\ell\}$, $k_{s}\neq k_{s+1}$ for $s$ ${\rm mod}$ $m$.

If $m=\ell-1$. Let $n$ be a positive integer such that $n\leq \ell$. We remove the $n$-th column and the $n$-th row, and obtain the submatrix $A'_{n}=(a_{ij})$, where $i_{n},j_{n}\in \{1,2,3,...,\ell\}/\{n\}$. Applying Proposition \ref{202}, we get
\begin{align*}
a_{k'_{1}k'_{2}}...a_{k'_{\ell-1}k'_{1}}=a_{k'_{2}k'_{1}}...a_{k'_{1}k'_{\ell-1}},
\end{align*}
where $k'_{1},...,k'_{\ell-1}\in\{1,...,\ell\}/\{n\}$, $k'_{s}\neq k'_{s+1}$ for $s$ ${\rm mod}$ $\ell-1$. For each positive integer $n$, we obtain a principal submatrix which will give us a similar equation as above. We time up all the right hand side and left hand side of all the equalities obtained from each principal submatrix. We have
\begin{align*}
(a_{k_{1}k_{2}}a_{k_{2}k_{3}}...a_{k_{\ell}k_{1}})^{(\ell-1)}=(a_{k_{2}k_{1}}a_{k_{3}k_{2}}...a_{k_{1}k_{\ell}})^{(\ell-1)}
\end{align*}
By Definition \ref{201}, we know that $a_{k_{i}k_{j}}\leq0$, if $i\neq j$. Therefore the left and right hand sides are both positive, negative or zero. By $(\ell-1)$-th root on each side, we obtain
\begin{align*}
a_{k_{1}k_{2}}a_{k_{2}k_{3}}...a_{k_{\ell}k_{1}}=a_{k_{2}k_{1}}a_{k_{3}k_{2}}...a_{k_{1}k_{\ell}}.
\end{align*}
Similarly we can obtain the equations for other chosen $m$. Therefore, the generalized Cartan matrix is symmetrizable by Proposition \ref{202}.

To prove the converse, if the  $A=(a_{ij})_{1\leq i,j\leq \ell}$ is a symmetrizable generalized Cartan matrix, we can fix a positive integer $m\leq \ell$. By Proposition $2.7$, $k_{1},...,k_{m}\in \{1,2,...,\ell\}$, $m\leq \ell$, $k_{s}\neq k_{s+1}$ for $s$ ${\rm mod}$ $m$ we have
\begin{align*}
a_{k_{1}k_{2}}a_{k_{2}k_{3}}...a_{k_{m}k_{1}}=a_{k_{2}k_{1}}a_{k_{3}k_{2}}...a_{k_{1}k_{m}}.
\end{align*}
Therefore each principal submatrix is symmetrizable.
\end{proof}

By the above statement, we can now ensure the symmetrizability of the newly constructed generalized Cartan matrix by simply ensuring that each principal submatrix is symmetrizable. Therefore, we can have the following construction.

\begin{con}\label{304}
Based on a $N_{k,k+a}$ type Cartan matrix $A$, $k\geq2$, $a\geq 1$, we can obtain a $N_{k+1,k+a+1}$ Cartan matrix by the following steps:
\begin{enumerate}
\item Write out the symmetrisation of $A$=$DA'$;
\item Add one more column and row into $D$;
\item Consider possible ways of putting entries into $A$.
\end{enumerate}
\end{con}

To show how it works, we give an example following the construction of generalized Cartan matrices using the above algorithm.

\begin{exmp}\label{305}
We consider the following $N_{2,4}$ type generalized Cartan matrix $A_{2}$,
\[
A_{2}=
  \begin{bmatrix}
    2&-2&0&-1\\
    -1&2&-1&0\\
    0&-1&2&-1\\
    -1&0&-2&2\\
  \end{bmatrix}
\]
This matrix $A_2$ is symmetrizable, and the diagonal matrix is $D_{2}$,
\[
D_{2}=
  \begin{bmatrix}
    1&0&0&0\\
    0&2&0&0\\
    0&0&2&0\\
    0&0&0&1\\
  \end{bmatrix}
\]

By Corollary \ref{3002}, all the dimension $2$ principal submatrix should be of $N_{0,2}$ type. Therefore with the new entry $d_{5}$ added in $D_{2}$, we have $d_i/d_j=E_{ij}\in\{1,2,3,1/2,1/3\}$ or $E_{ij}=0$, where each one is in representation of the possible value of $a_{12}/a_{21}$ in the $N_{0,2}$ type generalized Cartan matrix.

So we have the following requirements:
\begin{enumerate}
    \item When $d_{5}$ is $2$, we have
    \begin{align*}
        (a_{51},a_{15}),(a_{54},a_{45})\in\{(-1,-2),(0,0)\}, \quad (a_{53},a_{35}),(a_{52},a_{25})\in\{(0,0),(-1,-1)\},
    \end{align*}
    \item When $d_{5}$ is $1$, we have
    \begin{align*}
    (a_{51},a_{15}),(a_{54},a_{45})\in\{(-1,-1),(0,0)\}, \quad (a_{52},a_{25}),(a_{53},a_{35})\in\{(0,0),(-2,-1)\},
    \end{align*}
    \item When $d_{5}=3$, we have $a_{35}$, $a_{53}$, $a_{25}$ and $a_{52}$ to be $0$, and
    \begin{align*}
    (a_{54},a_{45}),(a_{51},a_{15})\in\{(-1,-3),(0,0)\},
    \end{align*}
    \item When $d_{5}$ is $6$, we have
    \begin{align*}
    (a_{51},a_{15}),(a_{54},a_{45})\in\{(0,0)\},\quad (a_{53},a_{35}),(a_{52},a_{25})\in\{(-1,-3)\},
    \end{align*}
    \item When $d_{5}$ is $1/2$, we have
    \begin{align*}
    (a_{52},a_{25}),(a_{53},a_{35})\in\{(-2,-1),(0,0)\}, \quad (a_{51},a_{15}),(a_{54},a_{45})\in\{(0,0),(-1,-1)\},
    \end{align*}
    \item When $d_{4}=1/3$, then it forces the entries $a_{35}$, $a_{53}$, $a_{25}$ and $a_{52}$ to be $0$, and
    \begin{align*}
    (a_{54},a_{45}),(a_{51},a_{15})\in\{(-3,-1),(0,0)\}.
    \end{align*}
\end{enumerate}

The matrices satisfying all the conditions above are the $N_{3,5}$ type generalized Cartan matrices constructed based on $A_{2}$.
\end{exmp}

In Construction \ref{304}, we observe that there are some rules in adding the entries. This helps to save the time of constructions. The first rule is as followed: For a generalized Cartan matrix $A=(a_{ij})$, with the symmetrization $DA$, $D=diag(d_{i})$. Then either $a_{ij}=0$, or $a_{ij}=k\cdot(d_{j}/d_{i})$, for some $k\in \mathbb{Z}$.

We know that the matrix $DA=(a'_{ij})$ is symmeftric, which implies $d_{i}a_{ij}=d_{j}a_{ji}$. The irst rule can be proved by considering the following two situations. If $a\neq 0$, we have $d_{i}a_{ij}=d_{j}a_{ji}\neq0$. Therefore we have $a_{ij}=a_{ji}(d_{j}/d_{i})$, $k=a_{ji}$. This finishes the proof.

As for the second rule, it also consider the entries in the special case of when $d_{\ell+1}$ are not both $1$: For a generalized Cartan matrix $A$, with the symmetrization $DA$. If $D=diag(d_{i})$, $i\in\{1,2,...,\ell\}$, and the new entry to $D$ is $d_{\ell+1}$. Then if $(d_{i},d_{\ell+1})= 1$, and $d_{i}$, for $i\in\{1,2,...,\ell\}$, $d_{\ell+1}$ are not both $1$, we have $a_{\ell+1,i}=a_{i,\ell+1}=0$ or $a_{i,\ell+1}=b\cdot d_{i}$ and $a_{\ell+1,i}=c\cdot d_{\ell+1}$ for some $b,c\in \mathbb{Z}$.

To prove the second rules, we mainly consider when $a_{\ell+1,i}\neq0$ since if $a_{\ell+1,i}=0$, the conclusion is trivial. So we assume that $a_{\ell+1,i}\neq0$. With the same calculation as in the rules concluded from Construction \ref{304}, we have $a_{\ell+1,i}=a_{i,\ell+1}(d_{i}/d_{\ell+1})$ which is an integer. Since $d_{i}$ and $d_{\ell+1}$ are coprime, we have $a_{\ell+1,i}=b\cdot d_{\ell+1}$ for some integer $b$. The same calculation holds for $a_{i,\ell+1}$.

\section{Boundedness of Dimensions of $N_k$ type Generalized Cartan Matrices}

In this section, we mainly discuss the maximum and minimum dimensions of generalized Cartan matrices. To begin with, we give the observation of the existence of such maximum and minimum dimensions.

\subsection{General Case}

\begin{lem}
The maximum dimension for a $N_{k}$, $k\geq2$, type generalized Cartan matrix is at most $8+k$.
\end{lem}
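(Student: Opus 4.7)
The plan is to prove this by induction on $k$, using Theorem \ref{301} as the inductive tool and the classical rank bound for hyperbolic Kac-Moody algebras as the base case.

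For the base case $k=2$, I would appeal to the well-known classification of hyperbolic generalized Cartan matrices: any hyperbolic GCM has dimension at most $10 = 8+2$. This is a classical result in the literature on hyperbolic Kac-Moody algebras and should be cited rather than reproved here.

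For the inductive step, assume the statement holds for $N_{k-1}$. Let $A$ be a GCM of type $N_{k,n}$ with $k \geq 3$. By Theorem \ref{301}, $A$ contains a principal submatrix $A'$ of type $N_{k-1,n-1}$. By the induction hypothesis applied to $A'$, we have
\[
n - 1 \leq 8 + (k-1) = 7 + k,
\]
which immediately gives $n \leq 8 + k$, as desired.

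The main obstacle is really the base case, which is not internal to the paper: the bound of $10$ on the dimension of hyperbolic GCMs relies on the Vinberg/Kac classification of hyperbolic diagrams and must be imported as a citation. Once that is in hand, the inductive step is essentially a one-line consequence of Theorem \ref{301}, so the argument is short. A secondary point worth checking is that Theorem \ref{301} is proved for $k > 2$, so the induction is structured correctly with $k=2$ as the anchor and the inductive step launching at $k = 3$.
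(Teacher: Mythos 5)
Your proposal is correct and is essentially the same argument as the paper's: induction on $k$, anchored at the classical bound of $10$ for hyperbolic ($N_2$) matrices, with the inductive step driven by Theorem \ref{301} giving an $N_{k-1,n-1}$ principal submatrix so that $n-1 \leq 8+(k-1)$. Your phrasing of the step (applying the induction hypothesis directly to the submatrix) is just a cleaner rendering of the paper's statement that every $N_{k}$ matrix arises by adding one dimension to an $N_{k-1}$ matrix.
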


\begin{proof}
We prove the lemma by mathematical induction on $k$ in the $N_{k}$ type generalized Cartan matrices.

The maximum dimension for hyperbolic type generalized Cartan matrices is $10$. So the statement is true for $k=2$.

Now we assume the maximum dimension for $N_{n}$ type is $8+n$, for some $n\geq2$, and we consider the case when $k=n+1$. By Theorem \ref{301}, all the $N_{n+1}$ type generalized Cartan matrices can be obtained by adding one more dimension on some $N_{n}$ type generalized Cartan matrices. Therefore, the maximum dimension for $N_{n+1}$ type generalized Cartan matrices is at most $8+(n+1)$. So the statement is true for $k=n+1$.

By mathematical induction, we know that the statement is true for all positive integer $k>1$.
\end{proof}

As for the existence of the minimum dimension for the generalized Cartan matrices, it is obvious since dimensions can only be positive integers.

\begin{prop}
The minimum dimension of $N_{k}$, $k\geq2$, type generalized Cartan matrices is $k$.
\end{prop}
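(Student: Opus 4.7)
The proposition bundles two assertions: a lower bound that every $N_k$ type GCM has dimension at least $k$, and the existence of an $N_k$ type GCM of dimension exactly $k$. My plan is to treat them in turn.

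For the lower bound I would induct on $k$. The base case $k = 2$ is immediate: the only $1 \times 1$ GCM is $(2)$, which is of finite type, so any hyperbolic (i.e.\ $N_2$) matrix has dimension at least $2$. For the inductive step with $k \geq 3$, assume every $N_{k-1}$ type GCM has dimension at least $k - 1$. If $A$ is of type $N_k$, then Theorem \ref{301} supplies a proper principal submatrix $A'$ of type $N_{k-1}$, and the inductive hypothesis forces $\dim A' \geq k - 1$; since $A'$ is proper, $\dim A \geq k$.

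For existence I would exhibit an explicit family $B_k$ of dimension $k$ and type $N_k$. Let $B_k$ be the $k \times k$ symmetric GCM whose Dynkin diagram is the path $v_1 - v_2 - \cdots - v_k$, with $a_{12} = a_{21} = -3$ (a hyperbolic edge at the left end) and $a_{i, i+1} = a_{i+1, i} = -1$ for $2 \leq i \leq k - 1$, all other off-diagonal entries being $0$. Since $B_k$ is symmetric, it is trivially symmetrizable via $D = I$. I would then prove $B_k$ is of type $N_{k,k}$ by a second induction on $k$. For $k = 2$, $\det B_2 = 4 - 9 < 0$ makes $B_2$ indefinite, and its only proper connected principal submatrix $(2)$ is finite, so $B_2$ is hyperbolic and hence $N_2$. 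For the inductive step, I would show that the proper connected principal submatrices of $B_k$ are exactly the induced subpaths $\{v_i, v_{i+1}, \ldots, v_j\}$ with $(i, j) \neq (1, k)$: if $i \geq 2$ the subpath has only single edges and is of finite type $A_{j - i + 1}$; if $i = 1$ and $j < k$ the subpath coincides with $B_j$ and is of type $N_j$ with $j < k$ by induction. Consequently every proper connected principal submatrix of $B_k$ is $N_m$ for some $m < k$, and $B_{k-1}$ attains $m = k - 1$; Corollary \ref{204} then pins down $B_k$ as being of type exactly $N_k$.

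The main obstacle I anticipate is bookkeeping in the inductive verification, particularly the distinction between ``principal'' and ``connected principal'' submatrices: deleting an interior vertex of the path disconnects the diagram, yielding a submatrix with no direct type in the $N_m$ hierarchy. Invoking Corollary \ref{204}, which constrains only proper \emph{connected} submatrices, sidesteps this issue. The path-with-one-heavy-edge choice of $B_k$ is made precisely so that the connected subdiagrams decompose cleanly into ``finite subpaths'' and ``earlier $B_j$'s,'' keeping the recursive verification self-contained.
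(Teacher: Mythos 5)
Your proposal is correct, and its first half (the lower bound) runs exactly as in the paper: induct on $k$, use Theorem \ref{301} to extract a proper $N_{k-1}$ principal submatrix, and apply the inductive hypothesis. Where you genuinely diverge is the attainment half. The paper never writes down a witness of type $N_{k,k}$; instead it argues abstractly that adding one dimension to a minimal $N_{n,n}$ matrix cannot overshoot the hierarchy --- if the augmented matrix were of type $N_{n+\ell,n+1}$ with $\ell>1$, repeated use of Theorem \ref{301} would produce an $N_n$ submatrix of dimension $n+1-\ell<n$, contradicting the inductive hypothesis --- and concludes the minimum is exactly $n+1$. You instead exhibit the explicit family $B_k$ (a path with one $a_{12}=a_{21}=-3$ edge at the end) and verify by a second induction, via Corollary \ref{204}, that $B_k$ is of type $N_{k,k}$: the connected proper subdiagrams are precisely the subpaths, which are either finite $A$-type or an earlier $B_j$. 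This buys concreteness and sidesteps the weakest point of the paper's argument, namely the implicit assumptions that a suitable one-dimension extension of the minimal $N_{n,n}$ matrix exists and that the resulting matrix falls somewhere in the $N$-hierarchy at all; your handling of disconnected principal submatrices (only connected ones are constrained by Corollary \ref{204}) is also more careful than the paper's. The one place you compress is the claim that $B_k$ is not of type $N_m$ for $m<k$; it does follow, since $B_k$ contains the proper connected submatrix $B_{k-1}$, which by induction is of type $N_{k-1}$ and hence not of any lower type, while finite, affine, hyperbolic, and $N_m$-type matrices admit only proper connected submatrices of strictly lower type --- spelling this out would make the appeal to Corollary \ref{204} airtight.
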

\begin{proof}
We prove the proposition by mathematical induction on $k$ in the $N_{k}$ type generalized Cartan matrices.

The minimum dimension for hyperbolic type generalized Cartan matrices is $2$. So the statement is true for $k=2$.

Now we assume the minimum dimension for $N_{n}$ type generalized Cartan matrices is $n$, for some $n\geq2$, and we consider the case when $k=n+1$. By the Theorem \ref{301} in section $3$, all $N_{n+1}$ type generalized Cartan matrices can be obtained by adding one more dimension on $N_{n}$ type generalized Cartan matrices. So the minimum dimension for $N_{n+1}$ type generalized Cartan matrices is at least $n+1$. If by adding one more dimension, we obtain a $N_{n+\ell,n+1}$ type generalized Cartan matrix, where $\ell>1$. Then by Theorem \ref{301}, there exists a $N_{n+\ell-1,n+1}$ type principal submatrix, and a $N_{n+\ell-1,n}$ type generalized Cartan matrix contain a $N_{n+\ell-2,n-1}$ type principal submatrix. Repeatedly, we have the $N_{n+\ell,n+1}$ type generalized Cartan matrix contains a $N_{n,n+1-\ell}$ type principal submatrix, where $\ell>1$ and $n+1-\ell<n$. This contradicts to our assumption that minimum dimension for $N_{n}$ type generalized Cartan matrices is $n$. Therefore the minimum dimension is $n+1$ for $k=n+1$.

By mathematical induction, we know that the statement is true for all positive integer $k>1$.
\end{proof}

Now before we begin our discussion about the more specific maximum dimension of the generalized Cartan matrices, we consider one observation important in the following discussions.

\begin{prop}
If we can obtain a $N_{k',n'}$ generalized Cartan matrix $A'=(a'_{ij})$ from some $N_{k,n}$ generalized Cartan matrix $A=(a_{ij})$ by exchanging columns and rows, then $k=k'$, $n=n'$.
\end{prop}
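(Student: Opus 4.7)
The plan is to first observe that ``exchanging columns and rows'' is forced to be a simultaneous permutation on rows and columns: since $A'$ must itself be a generalized Cartan matrix, its diagonal entries are $2$, and only a common row--column permutation preserves the positions of the diagonal entries $a_{ii}=2$. Concretely, there must exist a permutation $\sigma$ of $\{1,\ldots,n\}$ such that $a'_{ij}=a_{\sigma(i)\sigma(j)}$ for all $i,j$, i.e.\ $A'=P_\sigma A P_\sigma^{-1}$ for the corresponding permutation matrix $P_\sigma$. The equality $n=n'$ is then immediate, since relabelling indices does not alter the number of rows or columns.

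The essential point is that such a simultaneous permutation carries principal submatrices to principal submatrices: the principal submatrix of $A'$ on an index set $S$ equals (after relabelling) the principal submatrix of $A$ on $\sigma(S)$. Therefore the bijection $S\mapsto\sigma(S)$ on index sets is compatible with the operation of taking principal submatrices, and any property of $A$ phrased in terms of its principal submatrices transfers to $A'$.

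Given this, I would prove $k=k'$ by induction on $k$. For the base cases $k=0,1,2$, I would verify directly that each of the defining conditions in Definition \ref{201} is invariant under simultaneous row--column permutation: the existence of $u$ with $u>0$ and $Au>0$, $Au=0$, or $Au<0$ is preserved by replacing $u$ with $P_\sigma^{-1}u$; $\det A$ and corank are invariant under conjugation by $P_\sigma$; and connectedness of the Dynkin diagram depends only on the unordered adjacency data, which is preserved. For the inductive step with $k\geq 3$, if $A$ is of type $N_k$ then by Definition \ref{203} at least one principal submatrix of $A$ is of type $N_{k-1}$ and every principal submatrix is of type $N_m$ for some $m<k$; applying the inductive hypothesis to each principal submatrix (which is of strictly smaller dimension, hence covered by the induction) shows that the corresponding principal submatrices of $A'$ have the same types, so $A'$ satisfies Definition \ref{203} with the same value of $k$.

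The main (and mild) obstacle will be the base step: one must confirm that the conditions in Definition \ref{201} distinguishing finite, affine, and hyperbolic types are formulated intrinsically enough to be invariant under conjugation by a permutation matrix. Once this verification is in hand, the induction on $k$ is entirely formal, and the proposition follows.
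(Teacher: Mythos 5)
Your proposal is correct, and its core base-case argument coincides with the paper's: the conditions of Definition \ref{201} involving products $Au$ are invariant under row/column exchange because one may correspondingly permute the entries of $u$ (your substitution $u\mapsto P_\sigma^{-1}u$ is exactly the paper's ``exchange $v_a$ and $v_b$'' remark, made precise). Where you go further is in two respects. First, you observe that the row and column exchanges are forced to be the \emph{same} permutation, since the diagonal entries $2$ cannot land off the diagonal (off-diagonal entries of a GCM are nonpositive); the paper never addresses this, treating row swaps and column swaps separately and informally. Second, and more substantially, you handle the full $N_k$ hierarchy by induction on $k$, using the fact that simultaneous permutation carries principal submatrices to principal submatrices, so the types of corresponding submatrices agree by the inductive hypothesis and Definition \ref{203} transfers verbatim. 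The paper's proof stops at the invariance of the finite/affine/indefinite trichotomy of Definition \ref{201} and simply concludes $k=k'$, even though the labels $N_k$ for $k\geq 2$ are defined through principal submatrices rather than directly through sign conditions on $Au$; your induction is precisely the step that closes this gap. In short, both arguments are sound in spirit, but yours is the more complete one: the paper's buys brevity, while yours buys an actual proof of the $k\geq 2$ cases (hyperbolic included) rather than an appeal to the base-level classification alone.
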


\begin{proof}
Since exchanging columns or rows do not change the number of dimensions the matrix has, to obtain $A'$ from $A$, $n$ must equal to $n'$.

Based on Definition \ref{201}, we have the type of a generalized Cartan matrices is related to its possible products with column matrices. Without loss of generality, we consider $\sum\limits^{n}_{i=1} a_{ij}v_{i}$, which is the product of $A=(a_{ij})$ with some matrix having the $i$th column vectors as $v_{i}$. Then when exchanging rows $i_{a}$ and $i_{b}$ in $A$, we can exchange $v_{a}$ and $v_{b}$ to give same sum. When exchanging the columns $j_{a}$ and $j_{b}$ in $A$, we have the sum remain the same. Therefore $k=k'$.
\end{proof}

\subsection{Maximal dimension of $N_3$-type Generalized Cartan Matrices}

In \S 4.1, we study the upper bound and lower bound of dimensions of $N_k$ type generalized Cartan matrices, in which the minimum plays as infimum for all $N_{k}$ type generalized Cartan matrices. So in this subsection, we study the supremum of dimensions in the case of compact and non-compact $N_3$ type separately. We do this through Construction \ref{304} on the principal submatrices starting from the highest-dimension ones.

We introduce the notations for the principal submatrices as follows. Given a $N_{2,n}$ type matrix $A$, let $A'$ be a principal submatrix of dimension $n-1$. We add one more row and column to $A$ and get a new matrix $A_1$. Then, the principal submatrix $A'$ will give us a dimension $n$ submatrix $A'_1$. We will construct $N_3$-type generalized Cartan matrix with respect to the matrix $A$ and $A'$. We will use the notation $(A,A')$ as the starting point.

Now by Theorem \ref{301}, all $N_{3,n}$ type generalized Cartan matrix would contain a $N_{2,n-1}$ type principal submatrix. Conversely, all $N_{3,n}$ type generalized Cartan matrices can be constructed from a $N_{2,n-1}$ type generalized Cartan matrix following Construction \ref{304}. By adding a dimension to $A$, we also add a dimension to $A'$. Note that the type of $A'$ could only be finite or affine. It means that constructing a $N_{3,n}$ type generalized Cartan matrix $A_1$ from $A$ is equivalent to construct a hyperbolic matrix $A'_1$ from $A'$. Therefore, we can construct all $N_{3,n}$ type generalized Cartan matrices with a careful discussion of the principal submatrix $A'$.

Before entering the discussion about the principal submatrices, we first introduce the notations.
Let $\mathcal{H}_{h_0,h_1,h_2}$ be the hyperbolic generalized Cartan matrix, which is corresponding matrix of dimension $h_2$ and in the $h_1$th-row of Table $h_{0}$ in \cite[Table 1, page 3759; Table 2, page 3762]{Sac}. Since the type of principal submatrix may be finite or affine, for those we follow the notations of the corresponding generalized Cartan matrices for diagrams in \cite[Chapter 4: Table aff1, Table aff2, Table aff3, Table fin]{Kac}. For example, $(\mathcal{H}_{1,2,3},B_2)$ is the hyperbolic matrix with principal submatrix $B_2$

\begin{align*}
\mathcal{H}_{1,2,3}=
  \begin{bmatrix}
    2&-1&0\\
    -3&2&-3\\
    0&-1&2\\
  \end{bmatrix},
  \quad
  B_{2}=
  \begin{bmatrix}
    2&-1\\
    -3&2\\
  \end{bmatrix}.
\end{align*}

We divide the discussion into two cases: the compact $N_{3}$ type and non-compact $N_{3}$ type generalized Cartan matrices. For each case, we initiate by choosing the hyperbolic type generalized Cartan matrices with highest dimension in that type and considering all the possible ways of adding a column and a row to obtain $N_{3}$ type generalized Cartan matrices.

We first consider the compact case. We will show that the maximal dimension is $5$ by constructing a $N_3$ type matrix $A_1$ with dimension $5$. As discussed above, we first take a pair $(\mathcal{H}_{1,4,4},B_{3})$, where

\begin{align*}
\mathcal{H}_{1,4,4}=
  \begin{bmatrix}
    2&-2&0&-1\\
    -1&2&-1&0\\
    0&-1&2&-1\\
    -1&0&-2&2\\
  \end{bmatrix},
  \quad
  B_{3}=
  \begin{bmatrix}
    2&-1&0\\
    -1&2&-1\\
    0&-2&2\\
  \end{bmatrix}.
\end{align*}

We start with the principal submatrix $A'=B_{3}$, and we will add one row and column to construct a new generalized Cartan matrix $A'_1$, which is considered as a principal submatrix of the expected $N_3$ type matrix $A_1$. Note that the type of $A'_1$ could only be compact hyperbolic type or finite type by Corollary \ref{204}. Therefore, the possible choices of $A'_1$ are $B_{4}$ and $F_{4}$. Take $A'_1=B_4$, we have
\begin{align*}
B_{4}=
 \begin{bmatrix}
    2&-1&0&0\\
    -1&2&-1&0\\
    0&-1&2&-1\\
    0&0&-2&2\\
 \end{bmatrix}.
\end{align*}
And we can let the $(a_{1,4}/a_{4,1})=0$, then we have the matrix
\begin{align*}
A_{1}=
 \begin{bmatrix}
     2&-2&0&-1&0\\
    -1&2&-1&0&0\\
    0&-1&2&-1&-1\\
    -1&0&-2&2&0\\
    0&0&-1&0&2\\
 \end{bmatrix}.
\end{align*}
The matrix $A_{1}$ is not of affine, finite nor hyperbolic type, but it has an dimension $4$ hyperbolic type principal submatrix and contains no affine type principal submatrices. So the maximum dimension of compact $N_{3,5}$ type generalized Cartan matrices is $5$.

As for non-compact cases, we will see that the maximal dimension of $N_3$ type cannot be $11$. The approach is that we choose all hyperbolic type matrices with dimension $10$ and try to construct new type $N_3$ type matrix with dimension $11$ with respect to the rule in Corollary \ref{204}. Subsequently, we will also show that $10$ is the supremum by giving examples of $N_{3}$ type generalized Cartan matrices of dimension $10$.

To begin with, we consider $(H_{2,79,10},B^{1}_{9})$.

\begin{align*}
H_{2,79,10}=
  \begin{bmatrix}
    2&-1&0&0&0&0&0&0&0&0\\
    -1&2&-1&0&0&0&0&0&0&0\\
    0&-1&2&-1&-1&0&0&0&0&0\\
    0&0&-1&2&0&0&0&0&0&0\\
    0&0&-1&0&2&-1&0&0&0&0\\
    0&0&0&0&-1&2&-1&0&0&0\\
    0&0&0&0&0&-1&2&-1&0&0\\
    0&0&0&0&0&0&-1&2&-1&0\\
    0&0&0&0&0&0&0&-1&2&-2\\
    0&0&0&0&0&0&0&0&-1&2\\
  \end{bmatrix},\\
B^{1}_{9}=
  \begin{bmatrix}
    2&-1&0&0&0&0&0&0&0\\
    -1&2&-1&0&0&0&0&0&0\\
    0&-1&2&-1&0&0&0&0\\
    0&0&-1&2&-1&0&0&0&0\\
    0&0&0&-1&2&-1&0&0&0\\
    0&0&0&0&-1&2&-1&0&0\\
    0&0&0&0&0&-1&2&-1&0\\
    0&0&0&0&0&0&-1&2&-2\\
    0&0&0&0&0&0&0&-1&2\\
  \end{bmatrix}.
\end{align*}
We look at the principal submatrix $B^{1}_{9}$. Since only hyperbolic type can have the affine type generalized Cartan matrices as principal submatrices while contained in a $N_{3}$ type generalized Cartan matrices by Corollary \ref{204}, we only consider hyperbolic types. In the list of dimension $10$ hyperbolic type generalized Cartan matrices, none contains it except $H_{2,79,10}$. If $a_{2,11}=a_{11,2}=-1$ in $B^{1}_{9}$, then notwithstanding what $a_{1,11}$ and $a_{11,1}$ are, there would be a hyperbolic type principal submatrix with dimension smaller than $10$. So $a_{i,11}=a_{11,i}=0$ for $i\in \{1,2,3,4,5,6,7,8,9\}$ in $B^{1}_{9}$. Now we think about the value of $a_{1,11}$ and $a_{11,1}$. All dimension $2$ principal submatrices should be affine or finite by Corollary \ref{204}. If it is not $0$, it is obvious that there would be an affine type like $E^{1}_{8}$, $A^{2}_{2}$, $A^{1}_{1}$, $A^{2}_{6}$ or $B^{1}_{6}$ with dimension smaller than $10$. By Corollary \ref{204}, it is impossible. Hence there are no $B^{1}_{11}$.

Now we consider the case of $(H_{2,78,10},D^{1}_{9})$,
\begin{align*}
H_{2,78,10}=
  \begin{bmatrix}
    2&-1&0&0&0&0&0&0&0&0\\
    -1&2&-1&0&0&0&0&0&0&0\\
    0&-1&2&-1&-1&0&0&0&0&0\\
    0&0&-1&2&0&0&0&0&0&0\\
    0&0&-1&0&2&-1&0&0&0&0\\
    0&0&0&0&-1&2&-1&0&0&0\\
    0&0&0&0&0&-1&2&0&-1&0\\
    0&0&0&0&0&0&0&2&-1&0\\
    0&0&0&0&0&0&-1&-1&2&-1\\
    0&0&0&0&0&0&0&0&-1&2\\
  \end{bmatrix},\\
D^{1}_{9}=
  \begin{bmatrix}
    2&-1&0&0&0&0&0&0&0\\
    -1&2&-1&0&0&0&0&0&0\\
    0&-1&2&-1&0&0&0&0\\
    0&0&-1&2&-1&0&0&0&0\\
    0&0&0&-1&2&-1&0&0&0\\
    0&0&0&0&-1&2&0&-1&0\\
    0&0&0&0&0&0&2&-1&0\\
    0&0&0&0&0&-1&-1&2&-1\\
    0&0&0&0&0&0&0&-1&2\\
  \end{bmatrix}.
\end{align*}
We look at the principal submatrix $D^{1}_{9}$. In the list of dimension $9$ hyperbolic type generalized Cartan matrices, none contains it except itself. If $a_{2,11}=a_{11,2}=-1$ in $D^{1}_{9}$, then notwithstanding what $(a_{1,11}/a_{11,1})$ is, there would be a hyperbolic type principal submatrices with dimension smaller than $10$. So $a_{i,11}=a_{11,i}=0$ for $i\in \{1,2,3,4,5,6,7,8,9\}$ in $D^{1}_{9}$. Now we think about the value of $a_{1,11}$ and $a_{11,1}$. All dimension $2$ principal submatrices should be affine or finite by Corollary \ref{204}. If it is not $0$, it is obvious that there would be an affine type like $E^{1}_{8}$, $A^{2}_{2}$, $A^{1}_{1}$, $A^{2}_{6}$, or $B^{1}_{6}$ with dimension smaller than $10$. By Corollary \ref{204}, it is impossible.

Now we consider the case of $(H_{2,77,10},E^{1}_{9})$, where

\begin{align*}
H_{2,77,10}=
  \begin{bmatrix}
    2&-1&0&0&0&0&0&0&0&0\\
    -1&2&-1&0&0&0&0&0&0&0\\
    0&-1&2&-1&0&0&0&0&0&0\\
    0&0&-1&2&-1&0&0&0&0&0\\
    0&0&0&-1&2&-1&0&0&0&0\\
    0&0&0&0&-1&2&-1&0&0&0\\
    0&0&0&0&0&-1&2&-1&-1&0\\
    0&0&0&0&0&0&-1&2&0&0\\
    0&0&0&0&0&0&-1&0&2&-1\\
    0&0&0&0&0&0&0&0&-1&2\\
  \end{bmatrix},\\
E^{1}_{9}=
  \begin{bmatrix}
    2&-1&0&0&0&0&0&0&0\\
    -1&2&-1&0&0&0&0&0&0\\
    0&-1&2&-1&0&0&0&0&0\\
    0&0&-1&2&-1&0&0&0&0\\
    0&0&0&-1&2&-1&0&0&0\\
    0&0&0&0&-1&2&-1&-1&0\\
    0&0&0&0&0&-1&2&0&0\\
    0&0&0&0&0&-1&0&2&-1\\
    0&0&0&0&0&0&0&-1&2\\
  \end{bmatrix}.
\end{align*}
We look at the principal submatrix $E^{1}_{9}$. In the list of dimension $9$ hyperbolic type generalized Cartan matrices, none contains it except $H_{2,77,10}$. If $a_{2,11}=a_{11,2}=-1$ in $E^{1}_{9}$, then notwithstanding what $a_{1,11}$ and $a_{11,1}$ are, there would be a hyperbolic type principal submatrix with dimension smaller than $10$. So $a_{i,11}=a_{11,i}=0$ for $i\in \{1,2,3,4,5,6,7,8,9\}$ in $E^{1}_{9}$. Now we think about the value of $a_{1,11}$ and $a_{11,1}$. All dimension $2$ principal submatrices should be affine or finite by Corollary \ref{204}. If it is not $0$, it is obvious that there would be an affine type like $E^{1}_{8}$, $A^{2}_{2}$, $A^{1}_{1}$, $A^{2}_{10}$, or $B^{1}_{10}$ with dimension smaller than $10$. By Corollary \ref{204}, it is impossible.

In conclusion, there are no $N_{3,11}$ type generalized Cartan matrices. Now we will show that the supremum is $10$.

Now we consider the case of $(H_{2,77,10},E^{1}_{9})$, where

\begin{align*}
H_{2,77,10}=
  \begin{bmatrix}
    2&-1&0&0&0&0&0&0&0&0\\
    -1&2&-1&0&0&0&0&0&0&0\\
    0&-1&2&-1&0&0&0&0&0&0\\
    0&0&-1&2&-1&0&0&0&0&0\\
    0&0&0&-1&2&-1&0&0&0&0\\
    0&0&0&0&-1&2&-1&0&0&0\\
    0&0&0&0&0&-1&2&-1&-1&0\\
    0&0&0&0&0&0&-1&2&0&0\\
    0&0&0&0&0&0&-1&0&2&-1\\
    0&0&0&0&0&0&0&0&-1&2\\
  \end{bmatrix},\\
E^{1}_{9}=
  \begin{bmatrix}
    2&-1&0&0&0&0&0&0&0\\
    -1&2&-1&0&0&0&0&0&0\\
    0&-1&2&-1&0&0&0&0&0\\
    0&0&-1&2&-1&0&0&0&0\\
    0&0&0&-1&2&-1&0&0&0\\
    0&0&0&0&-1&2&-1&-1&0\\
    0&0&0&0&0&-1&2&0&0\\
    0&0&0&0&0&-1&0&2&-1\\
    0&0&0&0&0&0&0&-1&2\\
  \end{bmatrix}.
\end{align*}
We look at the principal submatrix $E^{1}_{9}$. In the list of dimension $9$ hyperbolic type generalized Cartan matrices, none contains $E^{1}_{9}$ except $H_{2,77,10}$. If $a_{2,11}=a_{11,2}=-1$ in $E^{1}_{9}$, then notwithstanding what $a_{1,11}$ and $a_{11,1}$ are, there would be a hyperbolic type principal submatrix with dimension smaller than $10$. So $a_{i,11}=a_{11,i}=0$ for $i\in \{1,2,3,4,5,6,7,8,9\}$ in $E^{1}_{9}$. Now we think about the value of $a_{1,11}$ and $a_{11,1}$. All dimension $2$ principal submatrices should be affine or finite by Corollary \ref{204}. If it is not $0$, it is obvious that there would be an affine type like $E^{1}_{8}$, $A^{2}_{2}$, $A^{1}_{1}$, $A^{2}_{10}$, or $B^{1}_{10}$ with dimension smaller than $10$. By Corollary \ref{204}, it is impossible. In conclusion, there are no $N_{3,11}$ type generalized Cartan matrices.

Now we will show that the supremum is $10$. We start with the pair $(H_{2,74,9},E^1_{7})$, and construct a $N_{3}$ type generalized Cartan matrix $A_{1}$ such that the matrix $H_{2,74,9}$ is a principal submatrix of $A_{1}$. The matrices $H_{2,74,9},E^1_{7}$ are shown as follows
\begin{align*}
H_{2,74,9}=
    \begin{bmatrix}
    2&-1&0&0&0&0&0&0&0\\
    -1&2&-1&0&0&0&0&0&0\\
    0&-1&2&-1&0&0&0&0&0\\
    0&0&-1&2&-1&-1&0&0&0\\
    0&0&0&-1&2&0&0&0&0\\
    0&0&0&-1&0&2&-1&0&0\\
    0&0&0&0&0&-1&2&-1&0\\
    0&0&0&0&0&0&-1&2&-1\\
    0&0&0&0&0&0&0&-1&2\\
  \end{bmatrix},\\
  E^{1}_{7}=
  \begin{bmatrix}
    2&-1&0&0&0&0&0&0\\
    -1&2&-1&0&0&0&0&0\\
    0&-1&2&-1&-1&0&0&0\\
    0&0&-1&2&0&0&0&0\\
    0&0&-1&0&2&-1&0&0\\
    0&0&0&0&-1&2&-1&0\\
    0&0&0&0&0&-1&2&-1\\
    0&0&0&0&0&0&-1&2\\
  \end{bmatrix}.
\end{align*}
To begin with, we consider in hyperbolic type generalized Cartan matrices, $E^1_{7}$ is only contained in $H_{2,74,9}$. After adding one row and column, the possible affine type generalized Cartan matrices are $B^1_\ell$, $E^1_{7}$, $E^1_{6}$, $E^1_{9}$ $A^2_{2\ell-1}$, and $D^2_{\ell+1}$. To avoid having affine type principal submatrices of dimension smaller than $8$, we take $a_{9,10}=a_{10,9}=-1$, and $a_{i,10}=a_{10,i}=0$ for $i\in \{1,2,3,4,5,6,7,8,9\}$. Subsequently, we can have the following generalized Cartan matrix with dimension $10$. It is not affine, finite or hyperbolic, but every principal submatrix is either affine, finite or hyperbolic. More specifically, the highest dimension for affine type principal submatrices is $8$ and the dimension for hyperbolic type principal submatrix is $9$. Therefore, the matrix we constructed from $H_{2,74,9}$ is a $N_3$ type generalized Cartan matrix of dimension $10$ as follows.

\[
  \begin{bmatrix}
    2&-1&0&0&0&0&0&0&0&0\\
    -1&2&-1&0&0&0&0&0&0&0\\
    0&-1&2&-1&0&0&0&0&0&0\\
    0&0&-1&2&-1&-1&0&0&0&0\\
    0&0&0&-1&2&0&0&0&0&0\\
    0&0&0&-1&0&2&-1&0&0&0\\
    0&0&0&0&0&-1&2&-1&0&0\\
    0&0&0&0&0&0&-1&2&-1&0\\
    0&0&0&0&0&0&0&-1&2&-1\\
    0&0&0&0&0&0&0&0&-1&2\\
  \end{bmatrix}
\]

\bigskip
\noindent\small{\textsc{Cambridge A-Level center, Hangzhou Foreign Languages School}\\
309 Liuhe Rd, Xihu District, Hangzhou, Zhejiang Province, China}\\
\emph{E-mail address}:  \texttt{wendywangkh@outlook.com}

\end{document}